\newtheorem{theorem}{Theorem}[section]
\newtheorem{corollary}[theorem]{Corollary}
\theoremstyle{definition}
\newtheorem{remark}[theorem]{Remark}
\numberwithin{equation}{section}
\begin{document}


\baselineskip=17pt


\title[Ditkin sets for some functional spaces and applications to grand Lebesgue spaces]{Ditkin sets for some functional spaces and applications to grand Lebesgue spaces}

\address{Istanbul Arel University Faculty of Science and Letters\\ Department of Mathematics and Computer Sciences\\
\'Büyükçekmece\\
34537 Istanbul, Turkey}
\email{turangurkanli@arel.edu.trl}

\author[A.T. Gurkanli]{Ahmet Turan Gürkanlı}

\date{}

\begin{abstract}
 Let $G$ be a locally compact Abelian group with dual group $\widehat G $ and Haar measures  $d\mu$ and $\hat d\mu$ respectively. In this work we have proved that if $X$ is an essential Banach ideal in Beurling algebra $ L^1_{\omega}(G),$ then  a closed subset $E\subset \widehat G$ is a Ditkin set for $X$ if and only if $E$ is a Ditkin set for  $ L^1_{\omega}(G).$
Next, as an application we have investigated the Ditkin sets for grand Lebesgue space $L^{p),\theta}(G)$ and  Ditkin sets for $[L^p(G)]_{L^{p),\theta }}$, where  $[L^p(G)]_{L^{p),\theta }}$ is the closure of the set  $C_c(G)$ in  $L^{p),\theta}(G)$. 
 
\
\end{abstract}

\subjclass[2020]{Primary 46E30; Secondary 46E35; 46 B70}

\keywords{Ditkin sets, Grand Lebesgue space, Generalized Grand Lebesgue space }

\maketitle

\section{Introduction}
 Let $G$ be a locally compact Abelian group with dual group $\widehat G $ and Haar measures  $d\mu$ and $\hat d\mu$  respectively. $C_{c}(G)$ denotes the space of all continuous complex valued functions on $G$ with compact support.  A real valued measurable and locally bounded function $\omega$ on a  locally compact Abelian group $G$ is said to be weight function if $\ w(x)\ge1, w(x+y)\leq w(x)w(y)$ for all $x,y\in G.$ For $p=1,$ $L^1_{\omega}(G)$ (or $L^1 (G,\omega))$   is a Banach algebra under convolution,  called Beurling algebra. It is known that  $  L_{\omega}^1(G)$ admits a bounded approximate identity. For $1\leq p<\infty,$ we define the weighted space $L_{\omega}^p(\Omega)$ (or $L^p (G,\omega)$)  with the norm 
\begin{align}
\left\Vert {f}\right\Vert_{p,\omega}=\left( \int_{\Omega }\left\vert f\right\vert ^{p }\omega(x)dx\right) ^{\frac{1}{p }}.
\end{align}
It a Banach space, (see \cite {rs}, \cite{w}).

Let $\Omega $ be a locally compact Hausdorff space and let $(\Omega,\mathfrak{B},\mu)$ be finite Borel measure space. The grand Lebesgue space $L^{p)}\left( \Omega \right) $ was introduced by Iwaniec-Sbordone in \cite {is}. This Banach space is defined by the norm
\begin{align}
\left\Vert {f}\right\Vert _{p)}=\sup_{0<\varepsilon \leq p-1}\left(
\varepsilon \int_{\Omega }\left\vert f\right\vert ^{p-\varepsilon }d\mu
\right) ^{\frac{1}{p-\varepsilon }}
\end{align}
where $1<p<\infty . $ For $0<\varepsilon \leq p-1,$ $L^{p}\left( \Omega \right) \subset L^{p)}\left( \Omega \right) \subset \L^{p-\varepsilon }\left( \Omega \right)$
 hold. For some properties and applications of $L^{p)}\left( \Omega \right) $  we refer to papers \cite {cr} ,\cite {fio},\cite {gis} ,\cite {gur1},\cite{gur2}.
 A generalization of the grand Lebesgue
spaces are the spaces $L^{p),\theta }\left( \Omega \right) ,$ $\theta \geq 0,$
defined by the norm
\begin{equation}
\left\Vert f\right\Vert _{p),\theta ,\Omega }=\left\Vert f\right\Vert
_{p),\theta }=\sup_{0<\varepsilon \leq p-1}\varepsilon ^{\frac{\theta }{
p-\varepsilon }}\left( \int_{\Omega }\left\vert f\right\vert ^{p-\varepsilon
}d\mu \right) ^{\frac{1}{p-\varepsilon }}=\sup_{0<\varepsilon \leq
p-1}\varepsilon ^{\frac{\theta }{p-\varepsilon }}\left\Vert {f}\right\Vert
_{p-\varepsilon };
\end{equation}%
when $\theta =0$ the space $L^{p),0}\left( \Omega \right) $ reduces to Lebesgue
space $L^{p}\left( \Omega \right) $ and when $\theta =1$ the space $%
L^{p),1}\left( \Omega \right) $ reduces to grand Lebesgue space $L^{p)}\left(
\Omega \right)$,(see \cite {cr} ,\cite {gis} ) . Again for $0<\varepsilon \leq p-1,$
\begin{equation*}
L^{p}\left( \Omega \right) \subset L^{p),\theta }\left( \Omega \right) \subset
L^{p-\varepsilon }\left( \Omega \right) 
\end{equation*}
hold.
 It is also known that the grand Lebesgue space  $L^{p),\theta }\left( \Omega \right) $ is not reflexive.
\section{Ditkin sets for some functional spaces}
Throughout this work we will assume that  $G$ is a locally compact Abelian group with dual group $\widehat G $ and Haar measures  $d\mu$ and $\hat d\mu$ respectively.

 Assume that $X$  is a Banach ideal in  $L^1_{\omega}(G).$ In the spirit of \cite {s}, \cite {rs} and \cite{gur3} we call a closed set $ E\subset \widehat G $ is a Ditkin set for  $X$ if for every  $f\in X$ such that the Fourier transform  $\hat f$ of the function $f$ vanishes on $E$ can be approximated in $X$ with functions $f\ast F$ such that $\hat F$ vanishes in some neighbourhood on $E.$ 

Authors also used names such as C-set and Wiener-Ditkin set in some sources. 

\begin{theorem}
Let  $L^1_{\omega}(G)$ be a  Beurling algebra  and let $X$  be a Banach ideal in $L^1_{\omega}(G).$ Then every Ditkin set for $X$ is a Ditkin set for  $L^1_{\omega}(G).$ 
\end{theorem}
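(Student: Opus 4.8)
The plan is to reduce the Ditkin approximation problem in $L^1_\omega(G)$ to the one already guaranteed in $X$, using as a bridge a bounded approximate identity of the Beurling algebra that lies inside $X$. Concretely, let $E$ be a Ditkin set for $X$ and fix $g\in L^1_\omega(G)$ with $\widehat g|_E=0$ together with $\varepsilon>0$; I must produce $F\in L^1_\omega(G)$ whose transform $\widehat F$ vanishes on a neighbourhood of $E$ and satisfies $\|g-g*F\|_{1,\omega}<\varepsilon$.

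First I would invoke an approximate identity $(e_\alpha)$ for $L^1_\omega(G)$ chosen within $X$ and bounded in the $\|\cdot\|_{1,\omega}$-norm, so that $\|g-g*e_\alpha\|_{1,\omega}\to 0$; I pick $\alpha$ with $\|g-g*e_\alpha\|_{1,\omega}<\varepsilon/2$ and set $h=g*e_\alpha$. The purpose of this step is that, since $e_\alpha\in X$ and $X$ is a Banach ideal in $L^1_\omega(G)$, the convolution $h=g*e_\alpha$ again lies in $X$; moreover $\widehat h=\widehat g\,\widehat{e_\alpha}$ still vanishes on $E$. Thus $h$ is an element of $X$ whose Fourier transform vanishes on $E$, and at the same time $h$ is $L^1_\omega$-close to $g$.

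Now I apply the hypothesis that $E$ is a Ditkin set for $X$ to the function $h$: there is $F_0$ with $\widehat{F_0}$ vanishing in a neighbourhood of $E$ such that $\|h-h*F_0\|_X$ is as small as desired. Using the continuity of the inclusion $X\hookrightarrow L^1_\omega(G)$, say $\|\cdot\|_{1,\omega}\le C\|\cdot\|_X$, I arrange $\|h-h*F_0\|_{1,\omega}<\varepsilon/2$. The decisive algebraic observation is that $h*F_0=(g*e_\alpha)*F_0=g*(e_\alpha*F_0)$, so putting $F=e_\alpha*F_0\in L^1_\omega(G)$ one has $g*F=h*F_0$, while $\widehat F=\widehat{e_\alpha}\,\widehat{F_0}$ still vanishes in a neighbourhood of $E$. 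A triangle inequality then yields
\[
\|g-g*F\|_{1,\omega}\le\|g-h\|_{1,\omega}+\|h-h*F_0\|_{1,\omega}<\varepsilon,
\]
which is precisely the Ditkin approximation for $g$ in $L^1_\omega(G)$.

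I expect the main obstacle to be exactly the passage between the two spaces and their norms. The Ditkin hypothesis for $X$ constrains only members of $X$ and only in the $X$-norm, whereas the conclusion concerns an arbitrary $g\in L^1_\omega(G)$ in the weaker $\|\cdot\|_{1,\omega}$-norm; a naive approximation of $g$ by an element of $X$ would leave an uncontrolled term $\|(g-h)*F\|_{1,\omega}$, since the size of $F$ is not under our control. The device that removes this difficulty is to attach the perturbation to $g$ itself by convolving with an approximate identity belonging to $X$: the factorisation $g*e_\alpha*F_0=g*(e_\alpha*F_0)$ keeps the approximating function in the exact form $g*F$ and converts the $X$-estimate into an $L^1_\omega$-estimate through the continuous embedding. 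Verifying that such an approximate identity can indeed be taken inside the Banach ideal $X$, and that it is still an approximate identity for all of $L^1_\omega(G)$, is the one structural input I would need to pin down carefully.
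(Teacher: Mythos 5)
Your strategy is the same as the paper's: convolve with an approximate identity to push the problem into $X$, apply the Ditkin property of $X$ there, pull the resulting estimate back through the continuous embedding $\|\cdot\|_{1,\omega}\le C\|\cdot\|_{X}$, and re-associate the convolution so that the approximant has the exact form $g\ast F$ with $\widehat F$ vanishing near $E$. However, the one step you flag as needing verification is a genuine gap, not a formality: the hypothesis that $X$ is a Banach ideal in $L^1_{\omega}(G)$ does \emph{not} provide a bounded approximate identity of $L^1_{\omega}(G)$ lying inside $X$. For instance, take $\omega=1$ and $X=\{f\in L^1(G):\hat f(\gamma_0)=0\}$ for a fixed character $\gamma_0$: this is a closed (hence Banach) ideal, but if $(e_\alpha)\subset X$ satisfied $f\ast e_\alpha\to f$ for all $f\in L^1(G)$, then $\hat f(\gamma_0)=\lim_\alpha \hat f(\gamma_0)\hat e_\alpha(\gamma_0)=0$ for every $f$, which is absurd. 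So the bridge you rely on to carry an arbitrary $g\in L^1_{\omega}(G)$ into $X$ is simply not available for a general Banach ideal, and your argument collapses at its first step.

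The paper circumvents this by running the argument only for $f\in X$: with $(e_\alpha)$ the usual bounded approximate identity of $L^1_{\omega}(G)$ (not assumed to lie in $X$), the convolution $f\ast e_{\alpha_0}$ lands in $X$ automatically by the ideal property $L^1_{\omega}(G)\ast X\subseteq X$, and no approximate identity inside $X$ is needed. The price is that this establishes the Ditkin approximation in the $\|\cdot\|_{1,\omega}$-norm only for test functions $f$ that already belong to $X$; your reading of the conclusion -- that it must hold for \emph{every} $g\in L^1_{\omega}(G)$ with $\widehat g$ vanishing on $E$ -- is the literal one, and passing from ``all $f\in X$'' to ``all $g\in L^1_{\omega}(G)$'' would require an additional input such as density of $X$ in $L^1_{\omega}(G)$ combined with exactly your approximate-identity device. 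In short: your outline reproduces the intended mechanism, but the auxiliary fact it hinges on is false for general Banach ideals; you should either restrict the test function to $X$ as the paper does, or add a hypothesis guaranteeing that $X$ contains an approximate identity for $L^1_{\omega}(G)$.
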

\begin{proof}
 Since $X$ is a Banach ideal in $L^1_{\omega}(G), X$ is continuously embedded in $L^1_{\omega}(G).$ Then there exists $C>0$ such that $\|.\|_{1,w} \leq C\|.\|_X.$ Assume that a closed subset $E\subset \widehat G$ is a Ditkin set for $X.$ It is known that the Beurling's algebra  $L^1_{\omega}(G),$ admits a bounded approximate identity $(e_{\alpha})_{\alpha\in I},$ (see \cite {w},\cite{mn}). Let $f\in X$ such that $\hat f$ vanishes  on $E,$ and let $\varepsilon >0$ be given. Then there exists  $\alpha_0\in I$ such that 
\begin{align}
\|f-f\ast\ e_{\alpha_0}\|_{1,w}<\frac{\varepsilon}2.
\end{align}
Here $f\ast e_{\alpha_0}\in X$ and by the properties of Fourier transform, $(f\ast e_{\alpha_0}\widehat)=\hat f\hat e_{\alpha_0}=0$ on $E.$ Since $E$ is a Ditkin set for $X,$ there exists $F_1\in X$ such that $\hat F_1$ vanishes on a neighbourhood of $E$ and
\begin{align}
\|f\ast\ e_{\alpha_0}-F_1\ast(f\ast\ e_{\alpha_0})\|_X<\frac{\varepsilon}{2C}.
\end{align}
If we set $F=F_1\ast e_{\alpha_0},$ we have $F\in X$ and $\hat F=0$ on a neighbourhood of $E.$ Thus from (2.1) and (2.2) we obtain
\begin{align*}
\|f-f\ast F\|_{1,w}&\leq\|f-f\ast\ e_{\alpha_0}\|_{1,w}+\|f\ast\ e_{\alpha_0}-f\ast F\|_{1,w}\\&<\frac{\varepsilon}2+C\|f\ast\ e_{\alpha_0}-F_1\ast(f\ast\ e_{\alpha_0})\|_X=\frac{\varepsilon}2+C\frac{\varepsilon}{2C}=\varepsilon.
\end{align*}
This completes the proof.
\end{proof}
\begin{theorem}
Let  $L^1_{\omega}(G)$ be a  Beurling algebra  and let $X$  be an essential  Banach ideal in $L^1_{\omega}(G).$  Then every Ditkin set for $L^1_{\omega}(G)$ is a Ditkin set for $X.$ 
\end{theorem}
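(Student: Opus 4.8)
The plan is to deduce the (stronger) $X$-norm statement from the (weaker) $L^1_\omega$-norm statement granted by the hypothesis, using a factorization of $f$ in which the Wiener--Ditkin approximation is carried out on one factor while the other factor is kept fixed inside $X$. The whole difficulty is that being a Ditkin set for $L^1_\omega$ only gives approximation in $\|\cdot\|_{1,\omega}$, whereas we must approximate in $\|\cdot\|_X$; a factorization is what will transfer the $L^1_\omega$-smallness into $X$-smallness. First I would record the two elementary ingredients. Since $X$ is continuously embedded in $L^1_\omega$ there is $C>0$ with $\|\cdot\|_{1,\omega}\le C\|\cdot\|_X$; and since $X$ is a Banach ideal, the module inequality $\|u\ast v\|_X\le\|u\|_{1,\omega}\|v\|_X$ holds for $u\in L^1_\omega$, $v\in X$. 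Because convolution is commutative, this reads equivalently as $\|u\ast v\|_X\le\|v\|_{1,\omega}\|u\|_X$ whenever $u\in X$ and $v\in L^1_\omega$, and it is this commuted form that is decisive: it allows an $L^1_\omega$-small factor to yield an $X$-small product, provided the factor retained in the estimate lies in $X$.

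Next, given $f\in X$ with $\hat f=0$ on $E$ and $\varepsilon>0$, I would factor $f=g\ast h$ with $g\in X$ and $h\in\overline{L^1_\omega\ast f}\subseteq X$. Since $X$ is essential (so $L^1_\omega\ast X$ is dense in $X$) and, being a Banach ideal in $L^1_\omega$, is itself a convolution algebra carrying an approximate identity, Cohen's factorization theorem provides such a decomposition. Moreover every element of $\overline{L^1_\omega\ast f}$ has Fourier transform vanishing on $E$: the Fourier transform is continuous on $L^1_\omega$, and $\widehat{k\ast f}=\hat k\,\hat f=0$ on $E$ for each $k\in L^1_\omega$, so the property passes to $X$-limits. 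Hence $\hat h=0$ on $E$, while $h\in X\subseteq L^1_\omega$.

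Now I would invoke the hypothesis. As $E$ is a Ditkin set for $L^1_\omega$ and $h\in L^1_\omega$ has $\hat h=0$ on $E$, there is $F\in L^1_\omega$ with $\hat F$ vanishing on a neighbourhood of $E$ and $\|h-h\ast F\|_{1,\omega}<\varepsilon/\|g\|_X$. Keeping $g$ fixed, I would then write
\[
f-f\ast F=g\ast h-g\ast h\ast F=g\ast(h-h\ast F),
\]
and, applying the commuted module inequality with $g\in X$ and $h-h\ast F\in L^1_\omega$,
\[
\|f-f\ast F\|_X\le\|h-h\ast F\|_{1,\omega}\,\|g\|_X<\varepsilon.
\]
Finally $f\ast F=g\ast(h\ast F)\in X$ and $\widehat{f\ast F}=\hat f\,\hat F$ vanishes on the neighbourhood of $E$ on which $\hat F$ does, so $F$ witnesses the Ditkin property of $E$ for $X$, completing the argument.

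The step I expect to be the main obstacle is securing the factorization $f=g\ast h$ with \emph{both} factors inside $X$, rather than merely $g\in L^1_\omega$ as the plain module form of Cohen's theorem yields; this is precisely what converts $L^1_\omega$-smallness into $X$-smallness, and it is where essentiality must be used in earnest. Concretely, one must guarantee that the approximate identity furnishing the factorization can be chosen so that the retained factor $g$ belongs to $X$ with a controlled $X$-norm (for instance, when $X$ possesses a bounded approximate identity in its own norm, so that Cohen's theorem may be applied within the algebra $X$). Verifying this for an arbitrary essential Banach ideal is the delicate point, and the rest of the proof is the routine estimate above.
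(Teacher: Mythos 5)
Your overall strategy --- transferring $L^1_\omega$-smallness into $X$-smallness through a convolution factor that is kept in $X$ --- is sound, and your closing estimate is correct once the factorization is in hand. But the factorization is a genuine gap, not merely a delicate point, and you have correctly located it. Cohen's factorization theorem applied to the essential Banach $L^1_\omega$-module $X$ produces $f=a\ast h$ with $h\in\overline{L^1_\omega\ast f}\subseteq X$, but with $a$ only in $L^1_\omega$: the algebra factor is manufactured from the bounded approximate identity of the \emph{algebra}, so it lands in $L^1_\omega$, not in $X$. Running Cohen inside $X$ itself would require a bounded approximate identity for $X$ in its own norm, and this fails in exactly the intended examples: a Segal algebra (the prototype of an essential Banach ideal in $L^1$, and the situation of the paper's application to $[L^p(G)]_{L^{p),\theta}}$) has a bounded approximate identity in its own norm only if it equals $L^1$. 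So the decomposition $f=g\ast h$ with $g\in X$ and $\hat h=0$ on $E$ cannot be secured by the route you indicate, and without it your commuted module inequality has nothing to act on.

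The gap is repairable, and the repair brings you close to the paper's argument, which avoids factorization altogether. It suffices to find, for given $\varepsilon$, a single $u\in X$ with $\|f-f\ast u\|_X<\varepsilon/2$ (no condition on $\hat u$). Then apply the Ditkin hypothesis to $f$ itself, obtaining $\tau\in L^1_\omega$ with $\hat\tau$ vanishing near $E$ and $\|f-f\ast\tau\|_{1,\omega}<\varepsilon/(2\|u\|_X)$, and set $F=u\ast\tau$; your own commuted inequality gives $\|f\ast u-f\ast F\|_X=\|(f-f\ast\tau)\ast u\|_X\le\|f-f\ast\tau\|_{1,\omega}\,\|u\|_X$, and the triangle inequality finishes. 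The only remaining point is to produce $u\in X$ rather than $u\in L^1_\omega$, e.g. by approximating a member $e_\alpha$ of the bounded approximate identity in $\|\cdot\|_{1,\omega}$ by elements of $X$ (possible whenever $C_c(G)\subseteq X$ or $X$ is dense in $L^1_\omega$, as in the paper's applications). For comparison, the paper keeps $u=e_{\alpha_1}\in L^1_\omega$ and bounds $\|(f-f\ast F_1)\ast e_{\alpha_1}\|_X$ by $\|f-f\ast F_1\|_X\|e_{\alpha_1}\|_{1,\omega}$; note that this requires the $X$-norm of $f-f\ast F_1$, whereas the Ditkin hypothesis controls only its $L^1_\omega$-norm, so the published estimate trips on precisely the norm-transfer issue you isolated. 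Your instinct to place the retained factor in $X$ is the right one; it must simply be implemented approximately rather than via Cohen.
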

\begin{proof}
Assume that a closed subset $E\subset \widehat G$ is a Ditkin set for $L^1_{\omega}(G).$ Also  let $(e_{\alpha})_{\alpha\in I}$ be a bounded approximate identity of  Beurling's algebra  $L^1_{\omega}(G),$  and $\|e_{\alpha}\|_{1,w}<M,$ for all $\alpha\in I$ (see \cite {w},\cite{mn}). Since $X$  is an essential Banach ideal in $L^1_{\omega}(G),$ then for every $g\in X$
\begin{align}
lim_{I} g\ast e_{\alpha}=g
\end{align}
in X, (see  Corollary 15.3 in \cite{dw}).
 Let $f\in X$ such that $\hat f$ vanishes  on $E,$ and let $\varepsilon >0$ be given. From (2.3), there exists  $\alpha_1\in I$ such that
\begin{align}
\|f-f\ast\ e_ {\alpha_1}\|_X<\frac{\varepsilon}2.
\end{align}
Since $E$ is a Ditkin set for $L^1_{\omega}(G),$ there also exists  $F_1\in L^1_{\omega}(G)$ such that $\hat F_1$ vanishes on a neighbourhood of $E$ and
\begin{align}
\|f-f\ast {F_1}\|_{1,w}<\frac{\varepsilon}{2M}.
\end{align}
If we set $F=e_{\alpha_1}\ast F_1,$ then we have $F\in X$ and $\hat F$ vanishes on a neighbourhood of $E.$ Finally from (2.4) and (2.5), we have 
\begin{align*}
\|f-f\ast {F}\|_X&\leq\|f-f\ast\ e_{\alpha_1}\|_X+\|f\ast\ e_{\alpha_1}-f\ast F\|_X\\&=\|f-f\ast\ e_{\alpha_1}\|_X+\|f\ast\ e_{\alpha_1}-f\ast F_1\ast\ e_{\alpha_1}\|_X\\&\leq\|f-f\ast\ e_{\alpha_1}\|_X+\|f-f\ast {F_1}\|_X\|e_{\alpha_1}\|_{1,w}\\&<\frac{\varepsilon}2+M\frac{\varepsilon}{2M}=\varepsilon.
\end{align*}
This completes the proof.
\end{proof}

\begin{corollary}
  Assume that $X$ is an essential Banach ideal in $ L^1_{\omega}(G).$ Then  a closed subset $E\subset \widehat G$ is a Ditkin set for $X$ if and only if $E$ is a Ditkin set for  $ L^1_{\omega}(G).$
\end{corollary}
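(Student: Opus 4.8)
The plan is to obtain this biconditional directly from the two preceding theorems, since the corollary asserts precisely the conjunction of the two implications they establish. First I would record the observation that the hypothesis here is that $X$ is an \emph{essential} Banach ideal in $L^1_{\omega}(G)$, which is strictly stronger than the hypothesis of Theorem 2.1, where $X$ is only assumed to be a Banach ideal. Consequently both theorems are applicable, and the argument reduces to citing each one in the appropriate direction.

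For the forward implication (the ``only if''), I would apply Theorem 2.1: assuming $E$ is a Ditkin set for $X$, and using that an essential Banach ideal is in particular a Banach ideal, Theorem 2.1 yields that $E$ is a Ditkin set for $L^1_{\omega}(G)$. For the reverse implication (the ``if''), I would invoke Theorem 2.2, which genuinely uses the essentiality of $X$ through the convergence $\lim_I g\ast e_{\alpha}=g$ in $X$: assuming $E$ is a Ditkin set for $L^1_{\omega}(G)$, Theorem 2.2 gives that $E$ is a Ditkin set for $X$. Combining the two implications produces the claimed equivalence.

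There is no genuine obstacle in this corollary; it is a formal consequence of the two theorems. The only point worth flagging explicitly is the asymmetry in the hypotheses: the forward direction holds under the weaker assumption that $X$ is merely a Banach ideal (Theorem 2.1), while the reverse direction requires essentiality (Theorem 2.2), which is exactly what the bounded approximate identity argument in that proof exploits. Since the corollary assumes $X$ essential throughout, both directions are covered and the proof is complete.
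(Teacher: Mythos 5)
Your proof is correct and follows exactly the paper's route: the paper likewise dispatches the corollary by citing Theorem 2.1 for the forward direction and Theorem 2.2 for the reverse. Your additional remark that essentiality is needed only for the reverse implication is accurate and consistent with the hypotheses of the two theorems.
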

\begin{proof}
The proof is clear from Theorem 2.1 and Theorem 2.2.
\end{proof}
\begin{corollary}
 Assume that $X$  is an essential Banach ideal in $L^1(G).$ Then closed subgroups of  $\widehat G $ are Ditkin sets for $X.$\end{corollary}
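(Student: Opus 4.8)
The plan is to decouple the two ingredients hidden in the statement: the \emph{transfer} of the Ditkin property between $X$ and the underlying convolution algebra, which is exactly what the preceding results provide, and the \emph{classical} fact that closed subgroups are Ditkin sets for the ordinary group algebra $L^1(G)$. First I would observe that $L^1(G)$ is itself a Beurling algebra, namely the one attached to the trivial weight $\omega \equiv 1$, which plainly satisfies $\omega(x)\ge 1$ and $\omega(x+y)\le \omega(x)\omega(y)$. With this choice the hypothesis ``$X$ is an essential Banach ideal in $L^1(G)$'' is the special case $\omega\equiv 1$ of the hypothesis of Corollary 2.3. Hence Corollary 2.3 applies verbatim and yields that a closed set $E\subseteq \widehat G$ is a Ditkin set for $X$ if and only if it is a Ditkin set for $L^1(G)$. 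This reduces the entire assertion to the single claim that every closed subgroup $H$ of $\widehat G$ is a Ditkin set for $L^1(G)$.

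For the remaining claim I would invoke the classical synthesis theory of the group algebra: it is a well-known theorem of abstract harmonic analysis that every closed subgroup $H\le \widehat G$ is a set of spectral synthesis for $L^1(G)$, and in fact a Ditkin (C-)set; this can be cited from standard texts such as Reiter--Stegeman or Hewitt--Ross. If a self-contained argument were wanted, I would establish it in two steps. First, Ditkin's lemma that every singleton of $\widehat G$ is a Ditkin set for $L^1(G)$, where one approximates an $f$ with $\widehat f(\gamma_0)=0$ by convolutions $f\ast F$ with $\widehat F$ vanishing in a neighbourhood of $\gamma_0$, using a suitably rescaled approximate identity. Second, a transference/structure argument that upgrades the point case to a general closed subgroup $H$, exploiting the annihilator $K=H^{\perp}\le G$ and the duality $\widehat{G/K}\cong H$ together with the compatibility of the quotient (averaging) homomorphism $L^1(G)\to L^1(G/K)$ with the Fourier transform.

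The genuinely paper-specific content is entirely contained in Corollary 2.3; once the reduction to $\omega\equiv 1$ is made, nothing further about $X$ is required. Consequently the only real obstacle is the external subgroup theorem, and I expect the delicate point there to be the transference step for subgroups $K$ that are neither discrete nor compact, where the lifting-and-averaging construction must be made compatible with the $L^1$-norm and with the requirement that $\widehat F$ vanish on a \emph{neighbourhood} of $H$ rather than merely on $H$ itself. Since this is a standard result, I would simply cite it and conclude, by combining it with the equivalence furnished by Corollary 2.3, that closed subgroups of $\widehat G$ are Ditkin sets for every essential Banach ideal $X$ in $L^1(G)$.
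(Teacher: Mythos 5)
Your proposal is correct and follows essentially the same route as the paper: specialize Corollary 2.3 to the trivial weight $\omega\equiv 1$ and combine it with the classical theorem (Theorem 7.4.1 in Reiter--Stegeman) that closed subgroups of $\widehat G$ are Ditkin sets for $L^1(G)$. The extra sketch you give of the classical subgroup theorem is not needed, as the paper simply cites it.
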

\begin{proof}
It is known by Theorem 7.4.1 in \cite{rs} that closed subgroups of $\widehat G$ are Ditkin sets for $L^1 (G).$ Thus by Corollary 2.3, closed subgroups of $\widehat G$ are Ditkin sets for $X$.
\end{proof}

\section{Applications}

\subsection{Ditkin sets for the generalized  grand Lebesgue space $L^{p),\theta }(G)$}
In this part we will investigate the Ditkin sets for  generalized grand Lebesgue space $L^{p),\theta }(G).$

\begin{theorem}
 Let $G$ be a locally compact Abelian group with dual group $\widehat G $ and Haar measures  $d\mu$ and $\hat d\mu$ respectively. If $\mu(G)$ is finite, then every Ditkin set for the generalized grand Lebesgue space $L^{p),\theta }(G)$ is a Ditkin set for $L^1 (G).$
\end{theorem}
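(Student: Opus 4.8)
The plan is to recognize this statement as a direct application of Theorem 2.1 with the trivial weight $\omega \equiv 1$, so that $L^1_{\omega}(G) = L^1(G)$. The only thing that genuinely needs verification is that the generalized grand Lebesgue space $X = L^{p),\theta}(G)$ is a Banach ideal in $L^1(G)$; once this is established, Theorem 2.1 immediately delivers that every Ditkin set for $X$ is a Ditkin set for $L^1(G)$. So the whole task reduces to checking the two structural properties of a Banach ideal: a continuous embedding into $L^1(G)$ and closure under convolution by $L^1$-functions with the appropriate norm estimate.

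First I would establish the continuous embedding $L^{p),\theta}(G) \hookrightarrow L^1(G)$. Because $\mu(G)$ is finite, one has the chain $L^{p),\theta}(G) \subset L^{p-\varepsilon}(G) \subset L^1(G)$ for every $0 < \varepsilon \le p-1$, since $p-\varepsilon \ge 1$ and Hölder's inequality on a finite measure space embeds the larger $L^{p-\varepsilon}$ into $L^1$. To get the explicit constant I would simply take $\varepsilon = p-1$ in the definition of the norm $\|\cdot\|_{p),\theta}$, giving $(p-1)^{\theta}\|f\|_{1} = (p-1)^{\theta}\|f\|_{p-\varepsilon} \le \|f\|_{p),\theta}$, hence $\|f\|_1 \le (p-1)^{-\theta}\|f\|_{p),\theta}$. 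This yields the required inequality $\|\cdot\|_1 \le C\|\cdot\|_X$ with $C = (p-1)^{-\theta}$.

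Next I would verify the convolution ideal property, which I expect to be the main point. For $f \in L^1(G)$ and $g \in L^{p),\theta}(G)$, I would apply Young's inequality $\|f\ast g\|_{q} \le \|f\|_1 \|g\|_q$ for each exponent $q = p-\varepsilon$ with $0 < \varepsilon \le p-1$; all of these exponents satisfy $q \ge 1$, so Young's inequality is available on $G$. Multiplying by $\varepsilon^{\theta/(p-\varepsilon)}$ and taking the supremum over $0 < \varepsilon \le p-1$ gives $\|f\ast g\|_{p),\theta} \le \|f\|_1 \|g\|_{p),\theta}$, so that $f\ast g \in L^{p),\theta}(G)$ with the norm control characterizing a Banach ideal. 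Together with the completeness of $L^{p),\theta}(G)$ (known from the literature), this shows that $X = L^{p),\theta}(G)$ is a Banach ideal in $L^1(G)$. The genuine obstacle here is the \emph{uniform} control of the convolution across the entire family of exponents $p-\varepsilon$: the supremum defining the grand norm can only be passed through Young's inequality because the same multiplier $\|f\|_1$ works for every $\varepsilon$ simultaneously.

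Finally, having identified $L^{p),\theta}(G)$ as a Banach ideal in $L^1(G)$, I would invoke Theorem 2.1 (with $\omega \equiv 1$) to conclude that every Ditkin set for $L^{p),\theta}(G)$ is a Ditkin set for $L^1(G)$, which completes the argument. Note that essentiality is not required for this direction, so only the plain Banach ideal structure is needed.
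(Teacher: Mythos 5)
Your proposal is correct and follows essentially the same route as the paper: both reduce the statement to Theorem 2.1 by establishing that $L^{p),\theta }(G)$ is a Banach ideal in $L^1(G)$. The only difference is that the paper cites this ideal structure from the literature (Theorem 5 of the author's earlier paper on grand Lebesgue spaces), whereas you verify it directly --- the embedding constant $(p-1)^{-\theta }$ obtained at $\varepsilon =p-1$ and the uniform application of Young's inequality across the exponents $p-\varepsilon $ are both correct --- and you rightly observe that essentiality (which the paper notes fails here) is not needed for this direction.
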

\begin{proof}
Since $\mu(G)$ is finite, then $G$ is compact (see Theorem 4 in \cite {n}). It is known by Theorem 5 in  \cite{gur1}  that the generalized grand Lebesgue space  $L^{p),\theta }(G)$ is a Banach convolution module over $L^1(G).$ It is also proved in Theorem 4 in  \cite{gur1}  that the generalized grand Lebesgue space  $L^{p),\theta }(G)$ does not admit an approximate identity, bounded in $L^1 (G).$ Thus  $L^{p),\theta }(G)$ is not essential Banach module over $L^1(G)$ by Corollary 15.3 in \cite{dw}. Since we have the embedding   $L^{p),\theta }(G)\hookrightarrow L^1(G),$ then $L^{p),\theta }(G)$ is a Banach ideal in $ L^1(G)$ but not essential. Finally from Thorem 2.1 we observe that every Ditkin set for  $L^{p),\theta }(G)$ is a Ditkin set for $L^1(G)$.
\end{proof}
\begin{remark}
 Let $G$ be a locally compact Abelian group with dual group $\widehat G $ and Haar measures  $d\mu$ and $\hat d\mu$ respectively. Assume that $\mu(G)$ is finite.  It is known by Theorem 7.4.1 in \cite{rs} that closed subgroups of $\widehat G$ are Ditkin sets for $L^1 (G).$ Thus by Theorem 3.1, every Ditkin set for the generalized grand Lebesgue space $L^{p),\theta }(G)$ is a closed subgroups of $\widehat G.$ 
\end{remark}

\subsection{Ditkin sets for the space $[L^p(G)]_{L^{p),\theta }}$}

 It is known that the subspace ${C_{c}}(\Omega)$ of the space  $L^{p),\theta}\left( \Omega \right) $  is not dense in $L^{p),\theta}\left( \Omega \right)$.   Its closure $[L^p(\Omega)]_{L^{p),\theta }}$ consists of functions $f \in L^{p)}\left(\Omega \right)$  such that
\begin{equation}
\lim_{\varepsilon \rightarrow 0}\varepsilon ^{\frac{\theta }{p-\varepsilon }
}\left\Vert f\right\Vert _{p-\varepsilon}=0,  
\end{equation}
(see \cite{cr}).

 Now we will discuss the Ditkin sets for the space$[L^p(G)]_{L^{p),\theta }}$.
 \begin{theorem}
 Let $G$ be a locally compact Abelian group with dual group $\widehat G $ and Haar measures  $d\mu$ and $\hat d\mu$ respectively. Assume that $\mu(G)$ is finite. Then  a closed subset $E\subset \widehat G$ is a Ditkin set for $[L^p(G)]_{L^{p),\theta }}$ if and only if $E$ is a Ditkin set for  $ L^1(G).$
\end{theorem}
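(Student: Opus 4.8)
The plan is to recognize that Theorem 3.3 is an immediate consequence of the general machinery in Section 2, specifically Corollary 2.3, so the real work is to verify that the space $[L^p(G)]_{L^{p),\theta}}$ fits the hypotheses of that corollary, namely that it is an \emph{essential} Banach ideal in $L^1(G)$. Since $\mu(G)$ is finite, $G$ is compact, and I would first record that $[L^p(G)]_{L^{p),\theta}}$ is a closed subspace of $L^{p),\theta}(G)$, hence a Banach space in the inherited norm $\|\cdot\|_{p),\theta}$.

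First I would establish that $[L^p(G)]_{L^{p),\theta}}$ is a Banach ideal (module) in $L^1(G)$ under convolution. Because $L^{p),\theta}(G)$ is already known to be a Banach convolution module over $L^1(G)$ (Theorem 5 in \cite{gur1}), it suffices to check that the subspace $[L^p(G)]_{L^{p),\theta}}$ is closed under convolution by $L^1(G)$ and that the embedding $[L^p(G)]_{L^{p),\theta}}\hookrightarrow L^1(G)$ is continuous. For the module property I would take $f\in[L^p(G)]_{L^{p),\theta}}$, so that $f$ satisfies the vanishing condition (3.1), and $g\in L^1(G)$, and verify via the convolution inequality $\|g\ast f\|_{p-\varepsilon}\le\|g\|_1\|f\|_{p-\varepsilon}$ that $\varepsilon^{\theta/(p-\varepsilon)}\|g\ast f\|_{p-\varepsilon}\le\|g\|_1\,\varepsilon^{\theta/(p-\varepsilon)}\|f\|_{p-\varepsilon}\to0$ as $\varepsilon\to0$, so that $g\ast f$ again lies in $[L^p(G)]_{L^{p),\theta}}$; the continuity of the embedding into $L^1(G)$ follows from the chain of inclusions $L^{p),\theta}(G)\hookrightarrow L^{p-\varepsilon}(G)\hookrightarrow L^1(G)$ valid on a finite measure space.

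The decisive step, and the one I expect to be the main obstacle, is to show that $[L^p(G)]_{L^{p),\theta}}$ is an \emph{essential} module, i.e.\ that $\lim_I g\ast e_\alpha=g$ in the norm $\|\cdot\|_{p),\theta}$ for every $g$ in the space, where $(e_\alpha)$ is a bounded approximate identity of $L^1(G)$. This is precisely where Theorem 3.1 warned us that the \emph{full} space $L^{p),\theta}(G)$ fails to be essential, so the point is that restricting to the closure $[L^p(G)]_{L^{p),\theta}}$ of $C_c(G)$ repairs this defect. I would argue that since $C_c(G)$ (equivalently, by compactness, $C(G)$) is dense in $[L^p(G)]_{L^{p),\theta}}$ by definition, it is enough to verify the approximation on the dense class and then pass to the limit using the uniform bound $\sup_\alpha\|e_\alpha\|_1<\infty$; for a continuous $h$ on the compact group $G$ one has $\|h\ast e_\alpha-h\|_{p-\varepsilon}\to0$ uniformly in $\varepsilon$ by the standard translation-continuity argument, whence $\|h\ast e_\alpha-h\|_{p),\theta}\to0$.

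Once essentiality is in hand, $[L^p(G)]_{L^{p),\theta}}$ is an essential Banach ideal in $L^1(G)$, and Corollary 2.3 applies verbatim to give the equivalence: a closed set $E\subset\widehat G$ is a Ditkin set for $[L^p(G)]_{L^{p),\theta}}$ if and only if it is a Ditkin set for $L^1(G)$. The subtle part to handle carefully is the $\varepsilon\to0$ limit in the essentiality argument, since the supremum over $\varepsilon\in(0,p-1]$ defining the norm must be controlled uniformly; I would isolate a small interval $(0,\delta]$ where the vanishing condition (3.1) forces the terms to be small and handle the complementary compact range $[\delta,p-1]$ by the ordinary $L^{p-\varepsilon}$ convergence, the weight $\varepsilon^{\theta/(p-\varepsilon)}$ being bounded there.
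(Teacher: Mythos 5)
Your proposal is correct and follows the same overall skeleton as the paper: reduce the statement to Corollary 2.3 by verifying that $[L^p(G)]_{L^{p),\theta}}$ is an essential Banach ideal in $L^1(G)$. The difference lies in how essentiality is established. The paper does not prove it directly: it cites Theorem 6 of \cite{gur1} for the existence of an approximate identity of $[L^p(G)]_{L^{p),\theta}}$ that is bounded in the $L^1(G)$-norm, cites Theorem 5 of \cite{gur1} for the module and ideal structure, and then invokes Corollary 15.3 of \cite{dw} to convert the bounded approximate identity into essentiality. You instead prove essentiality from scratch: the module inequality $\varepsilon^{\theta/(p-\varepsilon)}\|g\ast f\|_{p-\varepsilon}\le\|g\|_1\,\varepsilon^{\theta/(p-\varepsilon)}\|f\|_{p-\varepsilon}$ shows the subspace defined by condition (3.1) is stable under convolution, and density of $C_c(G)$ together with the uniform bound $\sup_\alpha\|e_\alpha\|_1<\infty$ reduces $\lim_I g\ast e_\alpha=g$ to the case of continuous $h$ on the compact group $G$. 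That argument is sound, and it buys a self-contained proof where the paper relies on external results. One simplification is available to you in the last step: for continuous $h$ on compact $G$ you have $\|h\ast e_\alpha-h\|_{p-\varepsilon}\le\mu(G)^{1/(p-\varepsilon)}\|h\ast e_\alpha-h\|_\infty$ with $\mu(G)^{1/(p-\varepsilon)}\le\max(1,\mu(G))$ and $\varepsilon^{\theta/(p-\varepsilon)}\le\max\bigl(1,(p-1)^{\theta}\bigr)$ uniformly in $\varepsilon\in(0,p-1]$, so sup-norm convergence already controls the whole supremum defining $\|\cdot\|_{p),\theta}$ and the splitting of the $\varepsilon$-range into $(0,\delta]$ and $[\delta,p-1]$ is not needed. (You should, as you implicitly do, take $(e_\alpha)$ to be a standard approximate identity supported near the identity so that $h\ast e_\alpha\to h$ uniformly.)
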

\begin{proof}
Since $\mu(G)$ is finite then $G$ is compact. It is known by Theorem 6 in  \cite{gur1} that $[L^p(G)]_{L^{p),\theta }}$ admits an approximate identity bounded in $L^1(G).$ Also it is known by Theorem 5 in  \cite{gur1}  that $[L^p(G)]_{L^{p),\theta }}$ is Banach convolution module over $L^1(G).$ Since  $[L^p(G)]_{L^{p),\theta }}\hookrightarrow L^1(G), $ from Theorem 5 in  \cite{gur1},  $[L^p(G)]_{L^{p),\theta }}$  is a Banach ideal in  $L^1(G).$ Then $[L^p(G)]_{L^{p),\theta }}$ is essential Banach convolution ideal in $L^1(G)$ by Corollary 15.3 in  \cite{dw}. Finally by Corollary 2.3,  a closed subset  $E\subset \widehat G$ is a Ditkin set for $[L^p(G)]_{L^{p),\theta }}$ if and only if $E$ is a Ditkin set for  $ L^1(G).$
\end{proof}
\begin{remark}
 Let $G$ be a locally compact Abelian group with dual group $\widehat G $ and Haar measures  $d\mu$ and $\hat d\mu$ respectively. Assume that $\mu(G)$ is finite. It is known by Theorem 7.4.1 in \cite{rs} that closed subgroups of $\widehat G$ are Ditkin sets for $L^1 (G).$ Thus by Corollary 1, closed subgroups of $\widehat G$ are Ditkin sets for $[L^p(G)]_{L^{p),\theta }}$.
\end{remark}

\end{document}